\newtheorem{theorem}{Theorem}
\newtheorem{lemma}[theorem]{Lemma}
\newtheorem{corollary}[theorem]{Corollary}
\newtheorem{proposition}[theorem]{Proposition}
\newtheorem{definition}[theorem]{Definition}
\newtheorem{question}[theorem]{Question}
\newcommand{\Z}{\mathbb Z}
\newcommand{\Q}{\mathbb Q}
\newcommand{\R}{\mathbb R}
\renewcommand{\r}{\mathrm}
\newcommand{\mz}{'} % mjagkij znak -- instead use $\!'\!$ or something ?
\newcommand{\langl}{\begin{picture}(5.1,10)
\put(1.1,3.3){\rotatebox{60}{\line(1,0){5.5}}}
\put(1.1,3.3){\rotatebox{300}{\line(1,0){5.5}}}
\end{picture}}
\newcommand{\rangl}{\begin{picture}(5,10)
\put(.9,3.3){\rotatebox{120}{\line(1,0){5.5}}}
\put(.9,3.3){\rotatebox{240}{\line(1,0){5.5}}}
\end{picture}}
\DeclareRobustCommand{\smallcoprod}{\begin{picture}(10,6)
\thicklines
\put(1.8,.6){\line(1,0){6.6}}
\put(3.6,.6){\line(0,1){6}}
\put(6.6,.6){\line(0,1){6}}
\end{picture}}
\begin{document}

\begin{center}
\texttt{Comments, corrections,
and related references welcomed, as always!}\\[.5em]
{\TeX}ed \today
\vspace{2em}
\end{center}

\title%
{Some embedding results for associative algebras}
\thanks{%
This preprint is readable online at
\url{http://math.berkeley.edu/~gbergman/papers/unpub}\,.
}

\subjclass[2010]{Primary: 16S15.
%                        fg...<>
Secondary: 16S50, 16W50, 20M25.}
%           endo  graded smgp_rgs
\keywords{Embeddings of associative algebras; diamond lemma;
affinization.
}

\author{George M. Bergman}
\address{University of California\\
Berkeley, CA 94720-3840, USA}
\email{gbergman@math.berkeley.edu}

\begin{abstract}
Suppose we wish to embed an (associative) $\!k\!$-algebra $A$
in a $\!k\!$-algebra $R$ generated in some specified way;
e.g., by two elements, or by copies of given $\!k\!$-algebras
$A_1,\ A_2,\ A_3.$
Several authors have obtained
sufficient conditions for such embeddings to exist.
We prove here some further results on this theme.
In particular, we merge the ideas of existing constructions
based on two generating {\em elements}, and on three given
{\em subalgebras,} to get a construction using two given subalgebras.

We pose some questions on how these results can be further strengthened.
\end{abstract}
\maketitle
% - - - - - - - - - - - - - - - - - - - - - - - - - - - - - -

I have decided not to publish this note -- the results are mostly
minor improvements on results in the literature; moreover, the
literature is large, and I don't have time to investigate it properly.
However, I hope that some of the ideas presented below will prove
useful for others.

Below, rings and algebras will be associative
and, except where the contrary is stated,
unital, with homomorphisms respecting $1.$
``Countable'' will mean finite or countably infinite.

I use, in several places below, techniques based on the Diamond
Lemma, in particular, on Theorems~1.2 and~6.1 of \cite{<>}.
I have worded the arguments where these are first used
so that the reader unfamiliar with \cite{<>} can see more or less
what is involved.
For precise formulations and proofs, see that paper.

For other sorts of results on embedding general $\!k\!$-algebras in
finitely generated ones, sometimes called ``affinization'',
see \cite{alg_algs} (where the emphasis is on controlling
the Gel{\mz}fand-Kirillov dimension), and works referenced there.

We remark that results of this
sort for rings were preceded in the literature, and perhaps
originally inspired by, similar results for groups.
Cf.~\cite{PES} and references given there.

\section{Algebras with few generators}\label{S.easy}

Let me begin with a result which we shall subsequently
strengthen in several ways, but which gives a simple
illustration of a technique we shall frequently use.

\begin{proposition}\label{P.easy}
Let $k$ be a commutative ring, and $A$ a countably generated
$\!k\!$-algebra which is free as a $\!k\!$-module on a basis
containing~$1.$
Then $A$ can be embedded in a $\!k\!$-algebra $R$ generated by three
elements.

In fact, given any countable generating set
$S=\{s_0,s_1,\dots,s_n,\dots\,\}$ for $A$ as a
$\!k\!$-algebra, one can take $R\supseteq A$
to have generators $x,\,y,\,z$ such that
\begin{equation}\label{d.xy^nz}\mbox{
$x\,y^n\,z\,=\,s_n$\quad$(n=0,1,\dots).$
}\end{equation}
\end{proposition}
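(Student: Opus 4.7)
The plan is to realize $R$ as a quotient of a free $k$-algebra by a confluent rewrite system, and extract the embedding from the resulting normal form.

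Fix a $k$-basis $B$ of $A$ containing $1,$ and let $B_0=B\setminus\{1\}.$ Let $F=k\langle B_0\sqcup\{x,y,z\}\rangle$ be the free unital $k$-algebra on this set. Define $R$ to be $F$ modulo the two-sided ideal generated by two families of relations: (a)~for each $b,b'\in B_0,$ a relation $bb'=\mu(b,b'),$ where $\mu(b,b')\in F$ is obtained from the product $b\cdot_A b'$ in $A$ by expanding it in $B,$ keeping $B_0$-components as the corresponding generators of $F$ and translating the $1$-component into the scalar in $k\subseteq F$; (b)~for each $n\geq0,$ a relation $xy^n z=\sigma_n,$ with $\sigma_n\in F$ the analogous translation of $s_n\in A.$ Orienting each relation as a rewrite rule, both families strictly reduce word length, so the well-foundedness hypothesis of the Diamond Lemma holds for a length-based semigroup order.

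The main verification is that all ambiguities resolve. Overlap ambiguities among the type~(a) rules, applied to a three-letter subword $bb'b'',$ reduce precisely to associativity of multiplication in $A.$ The two families of rules act on subwords in disjoint alphabets, so they produce no mixed overlaps or inclusions. Within family~(b), the words $xy^n z$ and $xy^m z$ admit no overlap (they begin with $x$ and end with $z,$ and $x\neq z$) and no proper inclusion (the only $x$ in $xy^m z$ occurs at the start, forcing $n=m$). Inclusion ambiguities within~(a) are resolved by the uniqueness of the rule indexed by the pair $(b,b').$

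By the Diamond Lemma, $R$ has a $k$-basis consisting of the irreducible words in $B_0\sqcup\{x,y,z\}.$ In particular, the irreducible words whose letters all lie in $B_0$ are exactly $1_F$ together with the single-letter words $b\in B_0,$ i.e.\ a bijective copy of $B.$ The natural $k$-linear map $A\to R$ sending $1_A\mapsto 1_R$ and $b\in B_0$ to its class is a ring homomorphism by construction of~(a), and it is injective because its image spans a $k$-linearly independent subset of the normal-form basis of $R.$ The identities $xy^n z=s_n$ hold by~(b); since $\{s_n\}$ generates $A,$ the three elements $x,y,z$ generate $R.$ The principal obstacle is the clean verification of the ambiguity conditions, especially in~(b), together with the bookkeeping needed so that the unit of $A$ is correctly identified with $1_R$ rather than being treated as a separate generator --- which is the reason it is excluded from the alphabet $B_0.$
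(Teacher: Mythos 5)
Your proposal is correct and follows essentially the same route as the paper's proof: present $R$ on the enlarged generating set $B_0\sqcup\{x,y,z\}$ with the multiplication-table relations for $A$ and the relations $x\,y^n\,z=\varepsilon(s_n),$ order by word length, check that the only ambiguities are the $b\,b'\,b''$ overlaps (resolvable by associativity of $A$), and read off the embedding and the generation by $x,\,y,\,z$ from the resulting normal form. The only cosmetic difference is that you treat $1$ as the scalar $1$ of the free algebra rather than as a basis element alongside $B_0,$ which matches the paper's use of the basis $\{1\}\cup B.$
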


\begin{proof}
Let $\{1\}\cup B$ be a basis for $A$ as a $\!k\!$-module,
and assume for convenience
that $B$ does not contain any of the symbols $x,\,y,\,z.$
In describing a presentation of $R,$ we will want to
distinguish between algebra elements and {\em expressions}
for those elements; so for every $a\in A,$
let $\varepsilon(a)$ denote the unique
expression for $a$ as a $\!k\!$-linear
combination of elements of $\{1\}\cup B.$

We shall prove our result by applying the Diamond Lemma,
Theorem~1.2 of \cite{<>}, to a presentation of $R,$ not in
terms of $x,\,y$ and $z,$ but in terms of the larger generating set
\begin{equation}\label{d.xyzB}\mbox{
$\{x,\,y,\,z\}\,\cup\,B,$
}\end{equation}
using both the relations which describe how
members of $B$ are multiplied in $A,$ namely
\begin{equation}\label{d.bb'}\mbox{
$b\,b'\,=\,\varepsilon(b\,b')$\quad$(b,b'\in B).$
}\end{equation}
and relations corresponding to~\eqref{d.xy^nz},
\begin{equation}\label{d.xy^nz,e}\mbox{
$x\,y^n\,z\,=\,\varepsilon(s_n)$\quad$(n=0,1,\dots),$
}\end{equation}

We view each of the relations in~\eqref{d.bb'} and~\eqref{d.xy^nz,e}
as a {\em reduction rule}, which specifies that the monomial of
length $\geq 2$ on its left-hand side is to
be reduced to the $\!k\!$-linear combination
of elements of $\{1\}\cup B$ on the right.
Note that each of these rules carries monomials to linear combinations
of shorter monomials.
Hence the partial order on the free monoid on our generating
set~\eqref{d.xyzB} that makes $s\leq t$
if and only if either $s=t,$ or $s$ is strictly shorter than $t,$
satisfies the hypotheses of \cite[Theorem~1.2]{<>}, namely,
that partial order is respected by the monoid structure, has descending
chain condition, and has the property
that the output of each of our reductions is a linear combination
of monomials $\leq$ the input monomial.
None of the monomials on the left-hand sides of our relations
are subwords of others, and the only monomials that can be formed
by overlap of two such monomials are those of the form
$b\,b'\,b''$ $(b,b',b''\in B),$ so these give the only
``ambiguities'' in the sense of \cite[\S1]{<>}.
Those ambiguities are ``resolvable'' -- i.e., the two possible
reductions that can be applied to an ambiguously reducible
monomial $b\,b'\,b'',$ when followed by appropriate further reductions,
lead to a common value --  because $A$ is associative.

Hence, by \cite[Theorem~1.2]{<>}, the algebra $R$
presented by generators~\eqref{d.xyzB}
and relations~\eqref{d.bb'} and \eqref{d.xy^nz,e}
has as a $\!k\!$-module basis the set of all monomials $w$ in
the generating set~\eqref{d.xyzB} such that
no subword of $w$ is the left-hand side
of any of the relations of~\eqref{d.bb'} or~\eqref{d.xy^nz,e}.
In particular, $\{1\}\cup B$ is a subset of this
basis, and by the relations~\eqref{d.bb'}, the
$\!k\!$-submodule of $R$ spanned by that subset of the basis is
a $\!k\!$-subalgebra isomorphic to~$A.$

Finally, by~\eqref{d.xy^nz,e}, the $\!k\!$-subalgebra generated
by $x,\,y$ and $z$ contains all the $s_n,$ hence contains
our image of $A,$ hence contains $B;$ so the three
elements $x,\,y$ and $z$ in fact generate~$R.$
\end{proof}

It was shown in \cite{LB+2} that one can in fact do the same using
two, rather than three generators, with the help of a slightly
less obvious family of monomials.

\begin{proposition}[{after \cite[Theorem~3.2]{LB+2}}]\label{P.easy2}
Let $k,$ $A$ and $S$ be as in Proposition~\ref{P.easy}.
Then $A$ can be embedded in a $\!k\!$-algebra $R$ generated by two
elements, $x$ and $y,$ so that
\begin{equation}\label{d.x^2y^nxy}\mbox{
$x^2\,y^{n+1}\,x\,y\,=\,s_n$\quad$(n=0,1,\dots).$
}\end{equation}
\end{proposition}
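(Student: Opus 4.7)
The plan is to parallel the proof of Proposition~\ref{P.easy} almost verbatim, presenting $R$ on the generating set $\{x,y\}\cup B$ subject to the multiplication rules~\eqref{d.bb'} together with the new family
\begin{equation*}
x^2\,y^{n+1}\,x\,y\;=\;\varepsilon(s_n)\qquad (n=0,1,\dots),
\end{equation*}
in place of~\eqref{d.xy^nz,e}. Each such rule sends a monomial to a $k$-linear combination of strictly shorter monomials, so the length partial order on the free monoid on $\{x,y\}\cup B$ satisfies the hypotheses of \cite[Theorem~1.2]{<>}. Once all ambiguities are shown to be resolvable, the Diamond Lemma will yield a $k$-basis of $R$ containing $\{1\}\cup B$, whose $k$-span is a copy of $A$; since the $s_n$ lie in the subalgebra generated by $x$ and $y$, so does $A$, hence also $B$, so that $x$ and $y$ generate $R$.

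All of the new work is in the ambiguity analysis. Ambiguities (of either inclusion or overlap type) mixing a rule from~\eqref{d.bb'} with one of the new rules are impossible because the two families use disjoint alphabets. Ambiguities between two rules from~\eqref{d.bb'} produce words $b\,b'\,b''$ that resolve by associativity of $A$, exactly as in Proposition~\ref{P.easy}. What remains is to rule out ambiguities among the new rules themselves.

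Fix $m,n\geq 0$. An inclusion ambiguity would require $x^2y^{m+1}xy$ to occur as a proper subword of $x^2y^{n+1}xy$; but $xx$ appears in the latter only at its initial position, so the subword would have to start there, and comparison of the remaining characters then forces $m=n$, contradicting properness. An overlap ambiguity would require a proper nonempty suffix of $x^2y^{m+1}xy$ to coincide with a proper nonempty prefix of $x^2y^{n+1}xy$; the proper nonempty suffixes beginning with $x$ are exactly $xy$ (of length~$2$) and $xy^{m+1}xy$ (of length~$m+4$), while every prefix of $x^2y^{n+1}xy$ of length~$\geq 2$ begins with $xx$. Since both candidate suffixes begin with $xy$ rather than $xx$, no overlap is possible.

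I expect this overlap verification to be the main, though essentially mechanical, obstacle; it is the step that justifies the peculiar shape of the monomials chosen in~\eqref{d.x^2y^nxy}. The initial factor $xx$ forces every sufficiently long prefix of such a monomial to begin with $xx$, while every proper $x$-initial suffix begins with $xy$; this prefix-suffix asymmetry is what eliminates all self- and cross-overlaps among the new reductions, and presumably motivated the choice of family in~\cite{LB+2}.
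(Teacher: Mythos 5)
Your proposal is correct and follows essentially the same route as the paper: the paper likewise reduces everything to the proof of Proposition~\ref{P.easy} with $x\,y^n\,z$ replaced by $x^2\,y^{n+1}\,x\,y,$ and disposes of ambiguities among the new words by the same observation that $x^2$ occurs only at the left end and no proper $x$-initial suffix begins with $x^2,$ so the only conceivable coincidence is one word as a left segment of another, ruled out by the position of the next $x.$ Your case analysis is just a slightly more explicit version of that argument.
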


\begin{proof}
Note that the words on the left-hand side
of~\eqref{d.x^2y^nxy} involve $x^2$ only in the leftmost
position, and have no nonempty subword of $x^2$ at their
right end.
This limits possible overlaps or inclusions among such
words to the inclusion of one as a left segment of another.
But any two distinct words of that sort differ in the position of
the next $x$ after the initial $x^2,$ making such inclusions impossible.

The rest of the proof follows exactly the
proof of Proposition~\ref{P.easy}, with $x\,y^n\,z$ everywhere
replaced by $x^2\,y^{n+1}\,x\,y.$
\end{proof}

Alternatively, one can get the fact that a $\!k\!$-algebra
containing $A$ can be generated by two elements
from the fact that it can be generated by three, using the
lemma on p.1096 of~\cite{O+V+W}, which notes that if an algebra
(they say ``ring'', but the argument works equally for algebras)
$R$ is generated by $n$ elements $r_1,\dots,r_n,$ then the
$n{+}2\times n{+}2$ matrix ring $M_{n+2}(R),$ which contains
a copy of $R,$ can be generated by $2$ elements.
(Namely, by one matrix which permutes the $n$ coordinates
cyclically, and one having first two rows $(0,r_1,\dots,r_n,0)$
and $(1,0,\dots,0),$ and all other rows zero.)
% However, the formula for recovering the $s_i$ from the two
% generators would have been rather complicated.
That lemma is used in~\cite{O+V+W} to show, as in
Proposition~\ref{P.easy2} above and Theorem~\ref{T.easy}
below, that countably generated rings can be embedded in
$\!2\!$-generator rings.
But the technique for going from countable to finite generation
is quite different from that used in most of this note;
we will look at it in the final section.

The proof of Proposition~\ref{P.easy2} given
above is essentially the one given in~\cite{LB+2}, using
a tool equivalent to the Diamond Lemma, which the authors
call the method of Gr\"{o}bner-Shirshov bases and apply
with $k$ assumed a field.
(Cf.\ also \cite{LB}, \cite{LB+PK}.)
As a statement that any countably generated algebra is embeddable
in a $\!2\!$-generated algebra, the above result
is attributed there to Mal{\mz}cev~\cite{AM}.
However, we shall see in \S\ref{S.extend} that the proof
in~\cite{AM} uses a different construction, which yields
embeddings of nonunital, but not in general of unital algebras,
and which leads to some further interesting ideas.

In the two preceding results,
we applied the Diamond Lemma, but not to the generating
set $\{x,\,y,\,z\}$ or $\{x,\,y\}$
that we might have expected to use.
In the proof of the corollary below, we apply the above proposition,
but not over the base ring one might expect.

\begin{corollary}[{cf.~\cite[Proposition~2]{O+V+W}}]\label{C.k,A}
Let $k_0$ be a commutative ring, and $A$ a countably generated
{\em commutative} $\!k_0\!$-algebra.
Then there exists a \textup{(}generally noncommutative\textup{)}
$\!k_0\!$-algebra $R$ which is
generated as a $\!k_0\!$-algebra by two elements, and
contains $A$ in its center.
\end{corollary}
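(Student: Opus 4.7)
The plan is to apply Proposition~\ref{P.easy2}, but taking the ``base ring'' to be $A$ itself rather than $k_0$. Fix a countable generating set $S = \{s_0, s_1, \dots\}$ for $A$ as a $k_0$-algebra. Viewing $A$ as an algebra over itself, it is free of rank $1$ as an $A$-module on the basis $\{1\}$, so the hypotheses of Proposition~\ref{P.easy2} are satisfied with $k = A$; and $S$ is trivially an $A$-algebra generating set for $A$. Proposition~\ref{P.easy2} thus yields an $A$-algebra $R$ containing a copy of $A$, generated as an $A$-algebra by two elements $x, y$, and satisfying $x^2 y^{n+1} x y = s_n$ for each $n$.

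Two observations then finish the argument. First, since $R$ is an $A$-algebra and $A$ is commutative, the image of $A$ automatically lies in the center of $R$. Second, although a priori $R$ is only generated as a $k_0$-algebra by $A \cup \{x, y\}$, each element $s_n$ equals the $k_0$-algebra word $x^2 y^{n+1} x y$ in $x, y$ and so lies in the $k_0$-subalgebra of $R$ generated by $\{x, y\}$; since $S$ already $k_0$-generates $A$, that subalgebra contains $A$, and together with $x, y$ generates all of $R$. Hence $R$ is generated as a $k_0$-algebra by the two elements $x, y$ alone.

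There is no real obstacle: the whole point is the change-of-base-ring trick. The only thing to check is that the freeness hypothesis in Proposition~\ref{P.easy2} is cheap enough to accommodate an arbitrary commutative $k_0$-algebra $A$, and indeed it becomes trivial as soon as we allow ourselves to enlarge the base ring from $k_0$ to $A$. The apparent ``loss'' of generators (from $A \cup \{x, y\}$ down to $\{x, y\}$) is recovered precisely because the relations $x^2 y^{n+1} x y = s_n$ already express the $k_0$-algebra generators of $A$ as words in $x$ and $y$.
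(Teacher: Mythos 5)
Your proposal is correct and is essentially identical to the paper's own proof: both apply Proposition~\ref{P.easy2} with $A$ playing the role of the base ring $k$ (so that freeness on a basis containing $1$ becomes trivial, with $B=\emptyset$), take $S$ to be a countable $k_0$-algebra generating set of $A$, and then observe that centrality of $A$ comes from $R$ being an $A$-algebra while the relations $x^2y^{n+1}xy=s_n$ force the $k_0$-subalgebra generated by $x,y$ to contain $A$ and hence equal $R$. Nothing further is needed.
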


\begin{proof}
Let us apply Proposition~\ref{P.easy2}
with $A$ in the role of both the $k$ and the $A$ of that proposition,
and with the role of~$S$ played by
any countable generating set for $A$ over $k_0.$
(Since $A$ is free as an $\!A\!$-module on the basis
$\{1\}\cup\emptyset,$ the empty set plays the role of the $B$
used in the proof of Proposition~\ref{P.easy},
and hence implicit in the proof of Proposition~\ref{P.easy2}.)
Proposition~\ref{P.easy2} now
gives us a faithful $\!A\!$-algebra $R$ generated over $A$ by
elements $x$ and $y$ satisfying the relations~\eqref{d.x^2y^nxy}.
Since by hypothesis the elements on the right hand sides
of these relations generate $A$ over $k_0,$
the $\!k_0\!$-subalgebra of $R$ generated by $\{x,\,y\}$
contains $A,$ hence is all of $R.$
Since $R$ was constructed as an $\!A\!$-algebra, $A$ is central in $R.$
\end{proof}

For instance, taking $k=\Z,$ and for $A$ any countable commutative
ring, we get a $\!2\!$-generated $\!\Z\!$-algebra which is a
faithful $\!A\!$-algebra.
Here $A$ might be $\Q,$ or an extension field of
$\Q$ of countable transcendence degree; or it might be a commutative
ring having any countable Boolean ring as its Boolean
ring of idempotents.
(I gave a construction of a finitely generated algebra with
an infinite Boolean ring of
central idempotents, using~\eqref{d.xy^nz}, at the
end of~\S12.2 of~\cite{prod_Lie1}.
The present note had its origin in thinking about how that
construction might be generalized.)

The contrast with commutative algebras is striking.
If $k$ is a field and $R$ a finitely generated commutative
$\!k\!$-algebra, I claim that any subfield $A$ of $R$ containing $k$
must be a finite extension of $k$ (i.e., finite-dimensional).
For by Theorem~IX.1.1 of \cite{SL.Alg}, $R$ admits a
homomorphism $h$ into the algebraic closure $\bar{k}$ of $k;$ so
as $R$ is finitely generated as an algebra and $\bar{k}$ is algebraic,
$h(R)$ must be a finite extension field of $k.$
Since $A$ is a field, $h$ is one-to-one
on $A,$ so $A$ itself must be finite over $k,$ as claimed.
Likewise, since a finitely generated commutative algebra over
a field is Noetherian, its Boolean ring
of idempotents cannot be infinite.

It is noted in~\cite[proof of Corollary~2]{O+V+W} that any
finitely generated $\!\Z\!$-algebra which contains $\Q$
is an example of a $\!\Q\!$-algebra which {\em cannot}
be written $R\otimes_{\Z}\Q$ for $R$ a $\!\Z\!$-algebra
which is free as a $\!\Z\!$-module.

\section{More general module-structures}\label{S.bimod}

Propositions~\ref{P.easy} and~\ref{P.easy2}, which we proved using
the ``everyday'' version of the Diamond Lemma, require
the algebra $A$ to be free as a $\!k\!$-module on a basis
of the form $B\cup\{1\}.$
Using the bimodule version of the Diamond Lemma,
we can drop that condition.

\begin{theorem}[cf.~{\cite[Theorem on p.1097]{O+V+W}}]\label{T.easy}
Let $k$ be a commutative ring, and $A$ any $\!k\!$-algebra generated
as a $\!k\!$-algebra by a countable set $\{s_0,\,s_1,\,\dots\,\}.$
Then $A$ can be embedded in a $\!k\!$-algebra $R$ generated by three
elements $x,\,y,\,z$ so that\textup{~\eqref{d.xy^nz}} holds,
and also in a $\!k\!$-algebra generated by two
elements $x,\,y$ so that~\eqref{d.x^2y^nxy} holds.
\end{theorem}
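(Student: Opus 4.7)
The plan is to repeat the proofs of Propositions~\ref{P.easy} and~\ref{P.easy2}, replacing the ordinary Diamond Lemma by its bimodule version, Theorem~6.1 of \cite{<>}. The hypothesis that $A$ be free as a $k$-module was needed earlier only so that we could present $R$ purely as a $k$-algebra by generators and relations, expressing the multiplication of $A$ via reductions $bb'\to\varepsilon(bb')$ on basis monomials. The bimodule version of the Diamond Lemma lets us treat $A$ opaquely as a coefficient ring, bypassing any choice of $k$-basis.

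Concretely, for the three-generator assertion I would define $R$ to be the tensor algebra over $A$ of the free $A$-bimodule on $\{x,y,z\}$, modulo the relations
\[
x\,y^n\,z \;=\; s_n \qquad (n=0,1,\dots);
\]
the two-generator case is analogous, with left-hand sides $x^2 y^{n+1} xy$. In either case the Diamond Lemma hypotheses reduce to a word-combinatorial check on the left-hand sides: the partial order on the free monoid on the new generators in which shorter words precede longer ones is respected by multiplication, has descending chain condition, and is strictly decreased by our reductions (whose right-hand sides lie in $A$, that is, carry no $x$, $y$ or $z$ letters at all). It remains only to verify the absence of unresolvable ambiguities among the left-hand sides. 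The two-generator family was already handled in the proof of Proposition~\ref{P.easy2}; for the three-generator family the claim is immediate, since each word $xy^n z$ contains the letter $x$ only at its start and the letter $z$ only at its end, so any overlap or inclusion of two such words forces them to coincide.

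Once the hypotheses are verified, the bimodule Diamond Lemma gives a normal-form description of $R$ in which the irreducible alternating monomials form an $A$-bimodule basis; in particular the canonical map $A\to R$ is injective, providing the required embedding. To finish, the $k$-subalgebra of $R$ generated by the new generators contains every $s_n$ by the defining relations, hence contains $A$ (the $s_n$ generate $A$ as a $k$-algebra), hence is all of $R$.

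The main obstacle is really a matter of bookkeeping in the bimodule setting: one might worry about extra ambiguities arising from interactions between the coefficient ring $A$ and the reductions. But because our left-hand sides are pure words in the new generators, with no $A$-letters inside them, no such interactions occur, and the ambiguity analysis is exactly the one already performed in the earlier propositions.
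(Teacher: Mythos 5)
Your proposal is correct and takes essentially the same route as the paper's own proof: both replace the module-theoretic freeness hypothesis by invoking the bimodule version of the Diamond Lemma (Theorem~6.1 of \cite{<>}) with $A$ itself as the coefficient ring, forming the tensor ring on free $\!k\!$-centralizing $\!(A,A)\!$-bimodules indexed by the new generators, and noting that the words $x\,y^n\,z$ (respectively $x^2\,y^{n+1}\,x\,y,$ already analyzed in Proposition~\ref{P.easy2}) admit no overlaps or inclusions, so the reduction system is ambiguity-free and $A$ survives as the component indexed by the empty word. The only point the paper makes explicit that you leave implicit is that the relation $x\,y^n\,z=s_n$ must be implemented as an $\!(A,A)\!$-bimodule homomorphism on the whole component $A\,x\,(A\,y)^n\,A\,z\,A\cong A^{\otimes_k(n+3)},$ e.g.\ $a_0\,x\,a_1\,y\cdots y\,a_{n+1}\,z\,a_{n+2}\mapsto a_0a_1\cdots a_{n+1}s_n\,a_{n+2},$ whose existence follows from the universal property of $\otimes_k.$
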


\begin{proof}
We shall prove the case based on~\eqref{d.xy^nz}.
The case based on~\eqref{d.x^2y^nxy} is exactly analogous.

Given $A$ and $\{s_i\}$ as above, let us
use the bimodule version of the Diamond Lemma,
Theorem~6.1 of \cite{<>}, with our $A$ in the role of the
(not necessarily commutative) ring called $k$ in that theorem.
We begin by taking three $\!(A,A)\!$-bimodules freely generated by
$\!k\!$-centralizing elements $x,$ $y$ and $z,$
\begin{equation}\label{d.AxA++}\mbox{
$M_x\,=\,A\,x\,A\,\cong\,A\otimes_k A,$\qquad
$M_y\,=\,A\,y\,A\,\cong\,A\otimes_k A,$\qquad
$M_z\,=\,A\,z\,A\,\cong\,A\otimes_k A,$
}\end{equation}
and forming the tensor ring $A\langl M_x\oplus M_y\oplus M_z\rangl$
on their direct sum.
If we grade this ring in the obvious way by the free monoid
on $\{x,\,y,\,z\},$ its homogeneous component indexed by
each word $x\,y^n\,z$ $(n\geq 0)$ is the product
\begin{equation}\begin{minipage}[c]{35pc}\label{d.Ax...zA}
$M_x\,(M_y)^n\,M_z\ =\ A\,x\,(A\,y)^n\,A\,z\,A\\[.2em]
{\vrule width3em height0pt depth0pt} %
\ \cong(A\,x\,A)\otimes_A (A\,y\,A)\otimes_A \dots
\otimes_A (A\,y\,A)\otimes_A (A\,z\,A)\ \cong
\ A\otimes_k A\otimes_k\dots \otimes_k A\quad (n+3\ A\mbox{'s}).$
\end{minipage}\end{equation}
We now impose on $A\langl M_x\oplus M_y\oplus M_z\rangl$ relations
determined by reduction maps sending the homogeneous component
indexed by each word $x\,y^n\,z$ to the component $A$ (indexed by
the empty word $1)$ using the $\!(A,A)\!$-bimodule homomorphism
that acts on reducible elements of~\eqref{d.Ax...zA} by
\begin{equation}\label{d.a_0...n+2}\mbox{
$a_0\,x\,a_1\,y\,a_2\,\dots\,a_n\,y\,a_{n+1}\,z\,a_{n+2}
\ \longmapsto\ a_0\,a_1\,a_2\,\dots\,a_n\,a_{n+1}\,s_n\,\,a_{n+2}.$
}\end{equation}
To see that such a homomorphism exists, we note first that the
description of the bimodule~\eqref{d.Ax...zA} as an
$\!n{+}3\!$-fold tensor product over $k$ of copies of $A$
(last step of~\eqref{d.Ax...zA})
shows that~\eqref{d.a_0...n+2} determines a
{\em $\!k\!$-module} homomorphism, by the universal property
of $\otimes_k.$
Looking at how the right-hand side
of~\eqref{d.a_0...n+2} depends on $a_0$ and $a_{n+2},$ we
see that this map is in fact a homomorphism of $\!(A,A)\!$-bimodules.

We note next that the case of~\eqref{d.a_0...n+2}
where $a_0=\dots=a_{n+2}=1$ shows that this
map carries the left hand side
of~\eqref{d.xy^nz} to the right hand thereof.

Because the family of reductions $x\,y^n\,z\mapsto 1$
in the free monoid on $\{x,\,y,\,z\},$ which indexes the components
of $A\langl M_x\oplus M_y\oplus M_z\rangl,$ has no ambiguities,
Theorem~6.1 of \cite{<>} shows that the $\!A\!$-ring $R$ presented by
the $\!(A,A)\!$-bimodules $M_x,$ $M_y,$ $M_z$ and the relations equating
inputs and outputs of each bimodule homomorphism~\eqref{d.a_0...n+2}
is the direct sum of all iterated tensor products of those three
bimodules in which no subproduct
$M_x\,(M_y)^n\,M_z$ $(n\geq 0)$ occurs.
In particular, the component of $R$ indexed by the monoid element $1$
is one of these summands, and is a copy of the algebra~$A.$

A priori, the ring $R$ we have constructed is generated by $A$
and the three bimodules~\eqref{d.AxA++}.
But since the relations~\eqref{d.xy^nz} and the structure of
$A$ allow us to express all elements
of $A$ in terms of $x,$ $y,$ $z$ and the elements of $k,$
and since all elements of $M_x=A\,x\,A,$ $M_y=A\,y\,A$ and $M_z=A\,z\,A$
can then be expressed using these elements and, again,
$x,$ $y$ and $z,$ we see that $R$ is, as claimed,
generated over $k$ by those three elements, completing
the proof of the case of the theorem based on generators
$x,$ $y$ and $z$ and relations~\eqref{d.xy^nz}.

The case based on generators $x$ and $y$ and
relations~\eqref{d.x^2y^nxy} is exactly analogous.
\end{proof}

Incidentally, we could have carried out the above
construction equally well with the terms on the right-hand side
of~\eqref{d.a_0...n+2}
permuted in any way which left $a_0$ and $a_{n+2}$ fixed.
Those two have to be placed as shown, to make the
map an $\!(A,A)\!$-bimodule homomorphism, but the decision on how
to order the others, in particular, of where to place the $s_n,$
was quite arbitrary.

\section{The ideal extension property}\label{S.extend}

Let us look at Theorem~\ref{T.easy} from a different point of view.

\begin{definition}\label{D.extend}
If $A\subseteq B$ are algebras, we will say that $A$ has
the {\em ideal extension property} in $B$ if every ideal $I\subseteq A$
is the intersection of $A$ with an ideal $J\subseteq B;$ equivalently,
is the intersection of $A$ with the ideal $B\,I\,B$
of $B$ that it generates.
\end{definition}

(This is somewhat like the {\em lying-over}
property of commutative ring theory; but since the latter
concerns prime ideals, we do not use that name, but one
modeled on the {\em congruence extension} property
of universal algebra \cite[p.412]{GG}.)

Now in Theorem~\ref{T.easy}, the countably generated algebra
$A$ can be thought of as the factor-algebra of
the free $\!k\!$-algebra on a countably infinite set of generators
by an {\em arbitrary} ideal $I;$ and the conclusion shows us that
$k\langl x,\,y,\,z\rangl$ (respectively, $k\langl x,\,y\rangl)$
has a homomorphic image in which the free subalgebra generated
by the elements $x\,y^n\,z$ (respectively, $x^2\,y^{n+1}\,x\,y)$
collapses to an isomorphic copy of $A.$
So the theorem says that %
that free subalgebra on countably many generators has the
ideal extension property within the given $\!2\!$- or
$\!3\!$-generator free algebra.

In fact, the method of proof of that theorem
clearly shows the following.

\begin{corollary}[to proof of Theorem~\ref{T.easy}]\label{C.extend}
Let $k$ be a commutative ring, $X$ a set, and $W$
a family of nonempty words in the elements of $X$
\textup{(}i.e., elements of the free semigroup on $X),$
such that no member of $W$ is a subword of another,
and no nonempty proper final subword of a member of $W$ is
also an initial subword of a member of $W.$
Then the subalgebra of $k\langl X\rangl$ generated
by $W$ is free on $W,$ and the inclusion
$k\langl W\rangl\subseteq k\langl X\rangl$
has the ideal extension property.
\end{corollary}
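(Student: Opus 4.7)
The plan is to mimic the proof of Theorem~\ref{T.easy}, replacing the specific family $\{xy^nz\}$ with the given family $W$. Given an arbitrary ideal $I\subseteq k\langl W\rangl$, set $A=k\langl W\rangl/I$; this is a $k$-algebra generated over $k$ by the images $\bar w$ of the words $w\in W$. I will construct a $k$-algebra $R$ containing a copy of $A$, generated as a $k$-algebra by elements indexed by $X$, together with a surjection $\pi\colon k\langl X\rangl \twoheadrightarrow R$ whose restriction to the subalgebra generated by $W$ identifies that subalgebra with $A$. The kernel $J=\ker\pi$ will then satisfy $J\cap k\langl W\rangl = I$, proving the ideal extension property. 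Taking $I=0$ in this construction simultaneously yields the freeness claim, since the composition $k\langl W\rangl\to k\langl X\rangl\to R$ is then the identity map of the free algebra, forcing both factors to be injective.

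To construct $R$, apply Theorem~6.1 of \cite{<>} with $A$ in the role of the coefficient ring. For each $x\in X$ introduce the free $(A,A)$-bimodule $M_x = A\,x\,A \cong A\otimes_k A$ with $k$-centralizing generator $x$, and form the tensor ring $A\langl\bigoplus_{x\in X} M_x\rangl$, graded by the free monoid on $X$. For each $w = x_{i_1}x_{i_2}\cdots x_{i_n}\in W$, the component indexed by $w$ is the iterated tensor product $M_{x_{i_1}}\otimes_A\cdots\otimes_A M_{x_{i_n}}$; define a reduction from this component to the component indexed by the empty word (namely, $A$) by
$$a_0\,x_{i_1}\,a_1\,x_{i_2}\,a_2\,\cdots\,a_{n-1}\,x_{i_n}\,a_n\ \longmapsto\ a_0\,a_1\,\cdots\,a_{n-1}\,\bar w\,a_n.$$
Exactly as in Theorem~\ref{T.easy}, the universal property of $\otimes_k$ together with the dependence on the outer factors $a_0$ and $a_n$ show that this is a well-defined $(A,A)$-bimodule homomorphism.

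The crucial point is that the two hypotheses on $W$ translate precisely into the absence of ambiguities in the sense of \cite[\S6]{<>}: the condition that no member of $W$ is a subword of another rules out inclusion ambiguities, while the condition that no nonempty proper final subword of a member of $W$ is also an initial subword of a member of $W$ rules out overlap ambiguities. With no ambiguities to resolve, Theorem~6.1 of \cite{<>} yields a quotient $R$ whose $A$-bimodule structure decomposes as a direct sum indexed by those monomials in the free monoid on $X$ that contain no member of $W$ as a subword; in particular the component indexed by the empty monomial is a copy of $A$. Since each $\bar w$ is recovered inside $R$ as the product $x_{i_1}\cdots x_{i_n}$ (the specialization of our reduction with all $a_j=1$), and the $\bar w$ generate $A$, the $k$-subalgebra of $R$ generated by the $x\in X$ contains $A$ and hence all the $M_x$, so it is all of $R$. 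This supplies the required surjection $\pi\colon k\langl X\rangl\twoheadrightarrow R$. The main obstacle, and essentially the only nontrivial step, is verifying that the two stated conditions on $W$ correspond exactly to the absence of Diamond Lemma ambiguities; the rest of the argument is bookkeeping closely parallel to Theorem~\ref{T.easy}.
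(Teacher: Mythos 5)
Your proposal is correct and is essentially the paper's own argument: run the bimodule form of the Diamond Lemma (Theorem~6.1 of \cite{<>}) over $A=k\langl W\rangl/I$ with one reduction for each $w\in W,$ the whole point being, as you say, that the no-subword and no-overlap hypotheses on $W$ are precisely the absence of inclusion and overlap ambiguities. One small caution on the freeness step: in the $I=0$ case the coefficient algebra $A$ should be taken to be the free algebra on an abstract copy of $W$ (so that injectivity of the induced map onto the subalgebra of $k\langl X\rangl$ generated by $W$ is the conclusion of your retraction argument rather than built into the phrase ``identity map of the free algebra''); alternatively, freeness already follows from the no-subword condition alone, by unique factorization of products of members of $W$ in the free semigroup on $X.$
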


\begin{proof}
Let $I$ be an ideal of $k\langl W\rangl,$
let $A = k\langl W\rangl/I,$ and imitate
the proof of Theorem~\ref{T.easy}.
(Note that the ``no inclusions and no overlap'' assumption on $W$
is precisely what is needed for a
reduction system mapping into $A$ every tensor product
$(A x_1 A)\otimes_k\dots\otimes_k (A x_n A)$
such that $x_1\dots x_n\in W$ to have no ambiguities.)
\end{proof}

\section{Some old results on nonunital embeddings}\label{S.nonunital}

The two earliest results I am aware of which showed that
wide classes of associative algebras could be embedded in two-generator
algebras, Theorem~3 of \cite{AM} and Lemma~2 of \cite{AS},
were obtained by methods that, in effect,
established the ideal extension property
(Definition~\ref{D.extend} above) for certain free subalgebras
of free algebras without the use of anything like the Diamond Lemma
(and, consequently, did not yield normal forms for the
$\!2\!$-generator algebras $R$ obtained).

Those results concerned nonunital algebras, so we make

\begin{definition}\label{D.nonunital}
In this section, $\!k\!$-algebras,
though still associative, will not be assumed unital.
\textup{(}In formal statements we will make this explicit,
using the word ``nonunital'', meaning ``not necessarily unital''.
On the other hand, our commutative base ring $k$ will
continue to be unital.\textup{)}

The free nonunital $\!k\!$-algebra on a set $X$ will
be denoted $[k]\langl X\rangl.$
For $R$ a nonunital $\!k\!$-algebra, we shall
write $k+R$ for the unital $\!k\!$-algebra obtained by
universally adjoining a unit to $R.$
Thus, $k+R$ has underlying $\!k\!$-module $k\oplus R.$

The {\em ideal extension property} for nonunital algebras will
be defined as for unital algebras.
The one formal change required is that the
ideal $J$ of $B$ generated by $I\subseteq A$
must be described as $(k+B)\,I\,(k+B)$ rather than $B\,I\,B.$
\end{definition}

Let us give a name to a property which is implicit in the arguments of
\cite{AM} and \cite{AS}.

\begin{definition}\label{D.isolated}
We shall call a subsemigroup $S$ of a semigroup $T$ {\em isolated}
if for all $t,\,t'\in T\cup\{1\}$ and $s\in S,$
one has $t\,s\,t'\in S\implies t,\,t'\in S\cup\{1\}.$
\textup{(}Here we write $\cup\{1\}$ for the construction of
adjoining $1$ to a semigroup, to get a monoid.\textup{)}
\end{definition}

\begin{lemma}[{after \cite[proof of Lemma~2]{AS}}]\label{L.iso>ext}
If $S$ is an isolated subsemigroup of a
semigroup $T,$ and $k$ is any commutative
ring, then the semigroup algebra $k\,S$ has the ideal extension
property in the semigroup algebra~$k\,T.$
\end{lemma}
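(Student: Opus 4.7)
The plan is to take an arbitrary ideal $I\subseteq k\,S,$ form the ideal $J = (k+k\,T)\,I\,(k+k\,T)$ of $k\,T$ that it generates, and show directly that $J\cap k\,S = I.$ The inclusion $I\subseteq J\cap k\,S$ is automatic, so all the content is in the reverse containment.

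To obtain that reverse inclusion, I would exploit the $\!k\!$-module decomposition $k\,T = k\,S\oplus k(T\setminus S)$ coming from the semigroup basis $T$ of $k\,T,$ and analyze how $J$ sits in it. By definition, $J$ is the $\!k\!$-linear span of products $t\,x\,t'$ with $t,t'\in T\cup\{1\}$ and $x\in I;$ writing such an $x$ as a $\!k\!$-linear combination of elements of $S,$ each product becomes a $\!k\!$-linear combination of semigroup elements $t\,s\,t'\in T,$ and the question reduces to deciding which of these $t\,s\,t'$ lie in $S.$

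This is where the isolation hypothesis does all the work. By Definition~\ref{D.isolated}, $t\,s\,t'\in S$ with $s\in S$ forces $t,t'\in S\cup\{1\};$ contrapositively, whenever $t\notin S\cup\{1\}$ or $t'\notin S\cup\{1\},$ every product $t\,s\,t'$ with $s\in S$ lies in $T\setminus S.$ Consequently, for each fixed pair $(t,t')\in (T\cup\{1\})^2$ the $\!k\!$-submodule $t\,I\,t'$ lies \emph{entirely} in $k\,S$ (in the case $t,t'\in S\cup\{1\},$ where one also uses that $S$ is a subsemigroup of $T)$ or \emph{entirely} in $k(T\setminus S)$ (in all other cases). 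Summing the spanning submodules according to which side they land on, one gets $J = J_1 + J_2$ with $J_1\subseteq k\,S$ and $J_2\subseteq k(T\setminus S);$ the sum is direct because $k\,S\cap k(T\setminus S) = 0,$ so $J\cap k\,S = J_1.$

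Finally, $J_1$ is by construction the sum of the submodules $t\,I\,t'$ with $t,t'\in S\cup\{1\},$ that is, $(k+k\,S)\,I\,(k+k\,S),$ which equals $I$ since $I$ is already an ideal of $k\,S.$ Hence $J\cap k\,S = I,$ the ideal extension property. The main point to get right is the cleanness of the splitting $J = J_1\oplus J_2;$ that splitting is exactly what the isolation hypothesis provides, and once one has it, the rest is bookkeeping.
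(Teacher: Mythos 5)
Your proposal is correct and follows essentially the same route as the paper: both arguments split the generating products $t\,f\,t'$ of $J$ according to whether $t,t'\in S\cup\{1\},$ use the isolation hypothesis to place the remaining terms in the span of $T\setminus S,$ and use the ideal property of $I$ in $k\,S$ to conclude $J\cap k\,S=I.$ Your packaging of this as a direct-sum statement $J=J_1\oplus J_2$ relative to $k\,T=k\,S\oplus k(T\setminus S)$ is just a tidier phrasing of the paper's elementwise decomposition $g=g'+g''.$
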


\begin{proof}
If $I$ is an ideal of $k\,S,$ then the general element
of the ideal $J=(k+k\,T)\,I\,(k+k\,T)$ generated by $I$
in $k\,T$ can be written
\begin{equation}\label{d.tit}\mbox{
$g\ =\ \sum_{i=1}^n t_i\,f_i\,t'_i,$
}\end{equation}
where each $f_i$ lies in $I,$ and all $t_i$ and $t'_i$
lie in $T\cup\{1\}.$
Let us write $g=g'+g'',$ where $g'$ is the sum of
those terms of~\eqref{d.tit} which have both
$t_i$ and $t'_i$ in $S\cup\{1\},$ and $g''$ is the sum of
all other terms.
Then clearly $g'\in I,$ while by the assumption that $S$ is isolated
in $T,$  the element $g''$ is a $\!k\!$-linear combination
of elements of $T-S.$
Hence if $g\in k\,S,$
we must have $g''=0,$ so $g=g'\in I.$
This shows that $J\cap k\,S=I,$ as required.
\end{proof}

We also note

\begin{lemma}\label{L.iso>free}
In a free semigroup, every isolated subsemigroup is free.
\end{lemma}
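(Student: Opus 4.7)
The plan is to identify the natural free generating set for $S$, namely the set of $\!S\!$-indecomposable elements, and to use the isolated property to rule out any nontrivial relation.

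First I would set $F$ to be the ambient free semigroup and let
\[
S_0 \ =\ S\,\setminus\,S\,S
\]
be the set of elements of $S$ that cannot be written as a product of two elements of $S.$  A straightforward induction on word length in $F$ shows that $S_0$ generates $S$ as a semigroup: if $s\in S\setminus S_0,$ then $s=s's''$ with $s',\,s''\in S,$ both strictly shorter than $s,$ and so each is a product of elements of $S_0$ by the inductive hypothesis.

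Next I would show that $S_0$ freely generates $S,$ i.e., that any two factorizations
\[
s_1\,s_2\,\cdots\,s_m \ =\ s'_1\,s'_2\,\cdots\,s'_n \qquad (s_i,\,s'_j\in S_0)
\]
must coincide term by term.  Since $F$ is free, word length is well-defined and $F$ is left-cancellative, and one side of such an equality begins with a prefix of the other.  Assume without loss of generality that $|s_1|\le|s'_1|$ in $F;$ then there exists $u\in F\cup\{1\}$ with $s'_1 = s_1\,u.$  Now apply the isolated property to the expression $s'_1 = 1\cdot s_1\cdot u$: since $s'_1\in S,$ $s_1\in S,$ and $1,\,u\in F\cup\{1\},$ we conclude that $u\in S\cup\{1\}.$  If $u\neq 1,$ then $u\in S,$ and $s'_1=s_1\,u\in S\,S,$ contradicting $s'_1\in S_0.$  So $u=1,$ giving $s_1=s'_1;$ cancelling in $F$ and iterating yields $m=n$ and $s_i=s'_i$ throughout.

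The only real subtlety is pinning down the right decomposition $t\,s\,t'$ on which to invoke the isolated hypothesis; once one commits to putting the entire prefix $s_1$ in the role of $s$ (rather than some shorter piece), the $\!F\!$-level fact that $s'_1$ and $s_1$ share the same initial segment does the work, and indecomposability of $s'_1$ in $S$ delivers the contradiction.  Everything else is routine manipulation in a free semigroup.
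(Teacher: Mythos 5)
Your proposal is correct and follows essentially the same route as the paper's proof: the paper also takes the generating set to be the $\!S\!$-indecomposable elements, applies the isolated hypothesis to the factorization $1\cdot s_1\cdot u$ of the longer initial factor to get $u\in S\cup\{1\},$ and uses indecomposability to force $u=1,$ finishing by cancellation (phrased there as an induction on length).
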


\begin{proof}
Let $S$ be an isolated
subsemigroup of the free semigroup $T,$ and $W$ the set
of elements of $S$ that cannot be factored within $S.$
Then every member of $S$ can be written as a
product of members of $W,$ and it suffices to show
that this factorization is unique.
Suppose
\begin{equation}\label{d.us&vs}\mbox{
$u_1\dots u_m\ =\ v_1\dots v_n\quad(m,\,n\geq 2,\ u_i,\,v_j\in W),$
}\end{equation}
and assume inductively that for every member of $S$ of smaller
length in the free generators of $T,$ the expression
as a product of members of $W$ is unique.
Without loss of generality, we may assume the length
of $v_1$ in the free generators of $T$ to be greater than
or equal to that of $u_1,$
and so write $v_1=u_1\,w$ for some $w\in T\cup\{1\}.$
Applying the definition of isolated subsemigroup
to the equation $v_1=1\cdot u_1\cdot w,$ we conclude
that $w\in S\cup\{1\}.$
Hence as $v_1$ cannot be factored in $S,$ we must have
$w=1,$ hence $v_1=u_1;$
so~\eqref{d.us&vs} implies $u_2\dots u_m\ =\ v_2\dots v_n.$
By our inductive assumption, these factorizations are the
same; so the two factorizations of~\eqref{d.us&vs} are the same.
\end{proof}

Remark: If $S\subseteq T$ are {\em monoids}, then $S$ is
isolated in $T$ if and only if it is closed under taking factors.
(``If'' is immediate;
``only if'' can be seen by applying Definition~\ref{D.isolated}
with $s=1.)$
Hence the isolated submonoids of a free monoid are just the
submonoids generated by subsets of the free generating
set, which will be uninteresting for our purposes.
But there are many interesting isolated subsemigroups of
free semigroups.
The next result notes a family of examples
implicit in the two papers referred to.

\begin{lemma}\label{iso>extn}
If $f$ is a function from the positive integers to the
positive integers,
then in the free semigroup $T$ on two generators $x$ and $y,$
the subsemigroup $S$ generated by all elements
\begin{equation}\label{d.xy^nx^fn}\mbox{
$x\,y^n\,x^{f(n)}\quad (n\geq 1)$
}\end{equation}
is isolated.
\textup{(}The case where $f(n)=n$ is used by Mal{\mz}cev~\cite{AM};
the case $f(n)=1$ by Shirshov~\cite{AS}.\textup{)}
\end{lemma}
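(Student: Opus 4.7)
The plan is to exploit the \emph{block structure} of words in $T,$ writing every nonempty $w\in T$ uniquely as an alternating concatenation of maximal $\!x\!$-blocks and maximal $\!y\!$-blocks. The first step is to verify that $w\in S$ if and only if its block structure has the form
\[
x\,y^{n_1}\,x^{f(n_1)+1}\,y^{n_2}\,x^{f(n_2)+1}\,\cdots\,x^{f(n_{k-1})+1}\,y^{n_k}\,x^{f(n_k)}
\]
for some $k\geq 1$ and positive integers $n_1,\dots,n_k$ (when $k=1,$ this reads $x\,y^{n_1}\,x^{f(n_1)});$ equivalently, the initial $\!x\!$-block has size $1,$ the $\!x\!$-block between the $j$-th and $(j{+}1)$-th $\!y\!$-blocks has size $f(n_j)+1$ for $1\leq j<k,$ and the final $\!x\!$-block has size $f(n_k).$ Reading off the $\!y\!$-block sizes recovers $(n_1,\dots,n_k),$ so the factorization of $w$ as a product of the generators~\eqref{d.xy^nx^fn} is unique.

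Now fix $s=\prod_{j=1}^k(x\,y^{n_j}\,x^{f(n_j)})\in S$ and suppose $u=t\,s\,t'\in S$ with $t,t'\in T\cup\{1\};$ write $u=\prod_{j=1}^l(x\,y^{m_j}\,x^{f(m_j)})$ for its $\!S\!$-factorization. The crucial observation is that each of $s$'s $k$ $\!y\!$-blocks, viewed inside $u,$ is flanked within $s$ by at least one $x$ on either side, so it is a full $\!y\!$-block of $u$ (not a proper subset of some larger one). Since between consecutive $\!y\!$-blocks of $s$ there are no $y$'s, no other $\!y\!$-blocks of $u$ can appear between them, so these $k$ blocks are consecutive $\!y\!$-blocks of $u,$ say its $i$-th through $(i{+}k{-}1)$-th, forcing $m_{i+j-1}=n_j$ for $j=1,\dots,k.$

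Next I align the endpoints of $s$ with generator boundaries of $u.$ Since $s$ starts with a single $x$ immediately followed by its first $\!y\!$-block, $s$'s first character occupies the position in $u$ immediately preceding $u$'s $i$-th $\!y\!$-block; but in the $\!S\!$-factorization of $u,$ that position is held by the leading $x$ of the $i$-th generator $x\,y^{m_i}\,x^{f(m_i)}.$ Comparing lengths, $|t|$ equals the total length of the first $i-1$ generators of $u,$ whence $t$ is their product and lies in $S\cup\{1\}$ (with $t=1$ exactly when $i=1).$ A symmetric argument applied to the final $\!y\!$-block of $s$ and the trailing $x^{f(n_k)}$ shows that $t'$ is the product of the last $l-i-k+1$ generators of $u,$ and so also lies in $S\cup\{1\}.$

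The only technically delicate point is the block-structure characterization of $S$ and the accompanying uniqueness of the generator factorization; with these in hand, every alignment of $s$ inside $u$ is forced by matching $\!y\!$-block sizes and the sizes of the $\!x\!$-blocks that flank them, ruling out the possibility that $s$ straddles a generator boundary.
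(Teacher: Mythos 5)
Your proof is correct and takes essentially the same approach as the paper's: your block-structure characterization and the matching of the $\!y\!$-blocks of $s$ with consecutive $\!y\!$-blocks of $u$ is just a more explicit version of the paper's observation that factor boundaries occur precisely at the occurrences of $x\,y$ (equivalently, at the $x$ immediately preceding each maximal $\!y\!$-block), which forces $s$ to coincide with a consecutive run of terms in the unique factorization of $u.$ The only presentational difference is that you align both endpoints directly (the leading single $x$ and the trailing $x^{f(n_k)},$ using $m_{i+k-1}=n_k),$ where the paper instead notes that no generator is a proper left divisor of another.
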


\begin{proof}
It is not hard to see that given a product $u$ of elements of
the form~\eqref{d.xy^nx^fn}, the factors in question begin precisely
at the points in $u$ where a sequence $xy$ occurs.
Hence, marking a break before each such point, we can recover
the factorization into such elements.
(So in particular, the semigroup $S$ is free on the set of
elements~\eqref{d.xy^nx^fn}.)

Now if such an product $u\in S$ has a factorization
$u=t\,v\,t'$ with $t,\,t'\in T\cup\{1\}$ and $v$
of the form~\eqref{d.xy^nx^fn},
then one of our break points occurs at the beginning of $v;$
hence the factor $v$ begins at the same point of $u$ as one of
the factors in our expression for $u$ as a product of
elements~\eqref{d.xy^nx^fn}.
But it is easy to check that no element~\eqref{d.xy^nx^fn} is a
proper left divisor of any other; so $v$ is in fact a term of our
factorization of $u$ into elements~\eqref{d.xy^nx^fn}.
From this it follows that, more generally, if $u\in S$
has a factorization $t\,s\,t'$ with $s$ a {\em product}
of elements~\eqref{d.xy^nx^fn}, that is, a member of $S,$
then $s$ is a substring of our expression for $u$ as such
a product, hence each of $t$ and $t'$ is either such a
substring or empty, proving that $S$ is indeed isolated in $T.$
\end{proof}

Combining the last three lemmas, we have

\begin{theorem}[{after Mal{\mz}cev \cite[Theorem~3]{AM}, Shirshov \cite[Lemma~2]{AS}}]\label{T.M&A}
Let $k$ be a commutative ring, let $B$ be the free nonunital
associative algebra $[k]\langl x,\,y\rangl,$ and let
$A$ be either the subalgebra of $B$ generated by all monomials
$x\,y^n\,x,$ or the subalgebra generated by all monomials $x\,y^n\,x^n$
\textup{(}or, more generally,
the subalgebra generated by all monomials $x\,y^n\,x^{f(n)}$
for any function $f$ from the positive integers to the
positive integers\textup{)} for $n\geq 1.$

Then $B$ is a free algebra on the indicated countably infinite
generating set, and has the ideal extension property in $A.$

This gives, for {\em nonunital} algebras, another way of embedding
an arbitrary countably generated
$\!k\!$-algebra in a $\!2\!$-generator $\!k\!$-algebra.\qed
\end{theorem}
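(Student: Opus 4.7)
The plan is simply to string together the three preceding lemmas; no genuinely new argument is required. Let $T$ denote the free semigroup on $\{x,y\}$, so that the free nonunital algebra $[k]\langl x,y\rangl$ may be identified with the nonunital semigroup algebra $kT$, and let $S\subseteq T$ be the subsemigroup generated by the monomials $xy^nx^{f(n)}$ $(n\geq 1)$. The subalgebra named in the theorem is then precisely $kS\subseteq kT$.

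First I would invoke Lemma~\ref{iso>extn} to conclude that $S$ is isolated in $T$. Lemma~\ref{L.iso>free} then yields that $S$ is free as a semigroup on the monomials $xy^nx^{f(n)}$, which passes immediately to the algebra level: $kS$ is the free nonunital $k$-algebra on that countable generating set — the first assertion of the theorem. Next, Lemma~\ref{L.iso>ext} supplies the ideal extension property for $kS$ inside $kT$, which is the second assertion.

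For the concluding embedding remark, given any countably generated nonunital $k$-algebra $C$, the freeness just established lets us write $C\cong kS/I$ for some ideal $I\subseteq kS$. The ideal extension property then furnishes an ideal $J\subseteq kT$ with $J\cap kS=I$, so that the natural map $kS/I\to kT/J$ is injective, embedding $C$ in the two-generator nonunital $k$-algebra $kT/J$.

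Since all the substantive work is already packaged in the three preceding lemmas, there is no real obstacle to clear; the assembly is essentially mechanical. The one point demanding a crumb of care is the transfer of statements from the semigroup level to the algebra level (freeness of $S$ on a set $W$ gives freeness of $kS$ on $W$ as a nonunital $k$-algebra, and the ideal-theoretic content of Lemma~\ref{L.iso>ext} is literally what is needed to extract the final embedding), both of which are immediate from the definitions.
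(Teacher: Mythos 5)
Your proposal is correct and follows exactly the route the paper intends: the theorem is stated with a \qed precisely because it is the mechanical combination of Lemmas~\ref{L.iso>ext}, \ref{L.iso>free} and~\ref{iso>extn}, which is what you carry out (your brief check that the generators $x\,y^n\,x^{f(n)}$ are the unfactorable elements of $S$ is the only detail needed, and it is also noted in the proof of Lemma~\ref{iso>extn}). No further comment is needed.
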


We remark that Shirshov's statement of \cite[Lemma~2]{AS}
leaves it unclear whether unital or nonunital algebras are intended.
However, in the unital case, if we write $B=k\langl x,\,y\rangl,$ the
unital subalgebra $A\subseteq B$ generated by the elements $x\,y^n\,x$
does {\em not} have the ideal extension property, which his
proof would require.
For example,
let $I$ be the ideal of $A$ generated by $x\,y\,x$ and $x\,y^2 x-1.$
Clearly $I$ is proper, since the factor-algebra $A/I$ is free on the
free generators $x\,y^n\,x$ of $A$ other than $x\,y\,x$ and $x\,y^2 x.$
However, the ideal $J$ that it generates in $B$ is improper, since
in $B/J,$ the element $x\,y\,x\,y^2\,x\,y\,x$ reduces, on the one hand,
to $0,$ in view of the factors $x\,y\,x,$ while on the other
hand, if we simplify
the middle factor $x\,y^2\,x$ to $1,$ and then do the same
to the resulting monomial, we get $1,$ so $0=1$ in $B/J.$
Thus, the algebras of \cite[Lemma~2]{AS} should be
understood to be nonunital.

The converse of Lemma~\ref{L.iso>free} above is not true.
For example, in the free semigroup $T$ on one generator $x,$
the subsemigroup $S$ generated by $x^2$ is not isolated
(since $x\cdot x^2\cdot x\in S),$ but $k\,S\subseteq k\,T$
does have the ideal extension property for every $k.$
This suggests

\begin{question}\label{Q.extend}
Is there a nice characterization of
the inclusions $S\subseteq T$ of semigroups \textup{(}respectively,
monoids\textup{)} for which the inclusion of nonunital
\textup{(}respectively, unital\textup{)}
$\!k\!$-algebras $k\,S\subseteq k\,T$
has the ideal extension property?

In particular, what can one say in the cases where $S$
and $T$ are free as semigroups or monoids?

\textup{(}One expects the answers to the above questions
to be independent of $k,$ but there is no evident reason why
this must be true.
It is not too implausible that it might
depend on the characteristic of $k.)$
\end{question}

\section{Embedding in algebras generated by a given family of algebras}\label{S.theta}

The condition of countable generation on the algebra
$A$ in the results of the preceding sections cannot be dropped.
For instance, if $k$ is a field, every finitely
generated $\!k\!$-algebra is countable-dimensional, hence so is every
algebra embeddable in such an algebra.
So, for example, a finitely
generated algebra over the field $\R$ of real numbers cannot
contain a copy of the rational function
field $\R(t),$ since that is continuum-dimensional.

To get around this difficulty, we might vary the construction
of Proposition~\ref{P.easy} by considering
$\!k\!$-algebras generated by elements $x$ and $z$ together with
all formal real powers $y^r$ $(r\in\R)$ of the symbol $y.$
We would then have enough expressions $x\,y^r\,z$ to hope
to get any continuum-generated $\!k\!$-algebra $A.$
In effect, we would be looking at $\!k\!$-algebras generated
by $x,$ $z$ and a copy of the group algebra $k\,G,$ where
$G$ is the additive group of the real numbers, written
multiplicatively as formal powers of $y.$

We can, in fact, get such results with $k\,G$ replaced by
a fairly general $\!k\!$-algebra.
Here is one such statement (where ``$A_0$'' is the algebra we
want to embed, and ``$A_1$'' the algebra generalizing $k\,G).$

\begin{theorem}\label{T.A0A1}
Suppose $A_0$ and $A_1$ are faithful algebras over a commutative
ring $k,$ such that $k$ is a module-theoretic direct summand
in each, and such that $A_0$ is generated as a $\!k\!$-algebra
by the image of a $\!k\!$-{\em module}
homomorphism $\varphi: A_1\to A_0.$

Then $A_0$ can be embedded in a $\!k\!$-algebra $R$ generated
over $A_1$ by two elements $x$ and $z$ satisfying
\begin{equation}\label{d.varphi}\mbox{
$x\,a\,z\ =\ \varphi(a)\quad (a\in A_1).$
}\end{equation}
\textup{(}Here for notational convenience we are identifying
$A_0$ and $A_1$ with their embedded images in $R.)$
\end{theorem}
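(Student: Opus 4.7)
The plan is to apply the bimodule version of the Diamond Lemma, as in the proof of Theorem~\ref{T.easy}, but with the base ring enlarged to contain copies of both $A_0$ and $A_1.$  I first form $D=A_0 *_k A_1,$ the coproduct of $A_0$ and $A_1$ over $k$ in the category of $k$-algebras.  A standard application of the ``everyday'' Diamond Lemma --- using the $k$-module retractions $A_i\to k$ guaranteed by our hypothesis that $k$ is a direct summand of each $A_i$ --- shows that $A_0$ and $A_1$ embed as $k$-subalgebras of $D$ via an alternating-tensor normal form, and in particular that $A_1$ is a $k$-module direct summand of $D;$ fix a splitting $D=A_1\oplus D'.$

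Next I take two free $(D,D)$-bimodules $M_x=D\otimes_k D$ and $M_z=D\otimes_k D,$ generated respectively by $k$-centralizing symbols $x$ and $z,$ and form the tensor ring $D\langl M_x\oplus M_z\rangl.$  Its component indexed by the word $xz$ is $M_x\otimes_D M_z\cong D\otimes_k D\otimes_k D,$ and the chosen splitting makes $D\otimes_k A_1\otimes_k D$ a $(D,D)$-sub-bimodule direct summand.  I impose the reduction
\[
D\otimes_k A_1\otimes_k D\;\longrightarrow\;D,\qquad d_0\otimes a\otimes d_2\;\longmapsto\;d_0\,\varphi(a)\,d_2,
\]
which is a well-defined bimodule homomorphism because $\varphi$ is $k$-linear and lands in $A_0\subseteq D.$  Setting $d_0=d_2=1$ recovers the desired relation~\eqref{d.varphi}.

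The reduction system has no ambiguities, since its only reducible pattern is an $x$ immediately followed by $z$ (with an element of $A_1$ between them), and the two-letter string $xz$ cannot overlap or include another occurrence of itself.  Hence Theorem~6.1 of \cite{<>} applies, and the resulting $k$-algebra $R$ has the irreducible tensor expressions as $k$-module basis; its empty-word component is all of $D,$ so in particular $A_0\hookrightarrow D\hookrightarrow R.$  Finally, since~\eqref{d.varphi} expresses every generator $\varphi(a)$ of $A_0$ as a word in $A_1\cup\{x,z\},$ the subalgebra of $R$ generated over $A_1$ by $x$ and $z$ contains $A_0,$ hence $D,$ hence all of $R.$

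The principal obstacle is the preliminary step of assembling $A_0$ and $A_1$ into a single base ring over which we have normal forms.  The coproduct $D=A_0 *_k A_1$ is the natural choice, and the hypothesis that $k$ is a $k$-module summand of each $A_i$ is precisely what is needed both for this coproduct to be well behaved and for the reduction source $D\otimes_k A_1\otimes_k D$ to split off as a $(D,D)$-bimodule summand of $D\otimes_k D\otimes_k D.$  Once these ingredients are in place, the bimodule Diamond Lemma argument is essentially that of Theorem~\ref{T.easy}, only simpler: a single family of relations~\eqref{d.varphi} replaces the infinite family $x y^n z = s_n.$
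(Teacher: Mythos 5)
Your overall strategy --- merge $A_0$ and $A_1$ into a single base ring with a known normal form, then adjoin $x$ and $z$ via a single unambiguous $x\,z\mapsto 1$ bimodule reduction --- is sound, and your choice of the coproduct $D=A_0\smallcoprod A_1$ (the paper instead uses $A=A_0\otimes_k A_1$) is reasonable. But two steps are not justified as written. First, the normal form for $D$ is not an application of the ``everyday'' Diamond Lemma: that version requires the algebras to be free as $\!k\!$-modules on bases containing $1,$ which is exactly the hypothesis being avoided here; what you need is Corollary~8.1 of \cite{<>} (used in the paper in the proof of Theorem~\ref{T.A0A1A2}), which derives the alternating-tensor decomposition of $A_0\smallcoprod A_1$ from the module-theoretic splittings $A_i=k\oplus M_i.$ Second, and more seriously, Theorem~6.1 of \cite{<>} takes each reduction to be a bimodule homomorphism defined on the \emph{entire} homogeneous component indexed by a word, with values in components indexed by smaller words; your reduction is defined only on the direct summand $D\otimes_k A_1\otimes_k D$ of the $\!x\,z\!$-component $M_x\otimes_D M_z\cong D\otimes_k D\otimes_k D,$ leaving $D\otimes_k D'\otimes_k D$ untouched. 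That situation is not covered by the cited theorem (and cannot be produced by refining the letter-indexed decomposition, since $A_1\,z\,D$ is not a $\!(D,D)\!$-subbimodule of $M_z$), so the conclusion ``the irreducible tensor expressions form a basis, with empty-word component $D$'' is not licensed as stated.

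The repair is small, and it is essentially the paper's device. Your splitting $D=A_1\oplus D'$ gives a $\!k\!$-module retraction $\rho:D\to A_1;$ extend the reduction to the whole $\!x\,z\!$-component by $d_0\otimes d_1\otimes d_2\mapsto d_0\,\varphi(\rho(d_1))\,d_2.$ This imposes extra but harmless relations $x\,d\,z=\varphi(\rho(d))$ for all $d\in D,$ still yields~\eqref{d.varphi} because $\rho$ restricts to the identity on $A_1,$ and now the single reduction $x\,z\mapsto 1,$ which has no ambiguities, is exactly in the situation of the proof of Theorem~\ref{T.easy}; Theorem~6.1 of \cite{<>} then shows the empty-word component $D$ survives in $R,$ so $A_0$ and $A_1$ embed, and your generation argument goes through verbatim. (Alternatively, having proved the embedding under this larger set of relations, your original algebra, which has fewer relations and maps onto this one compatibly with $D,$ also contains $D;$ but then the normal-form claim should be dropped.) The paper accomplishes the same thing with $A=A_0\otimes_k A_1$ in place of the coproduct, extending $\varphi$ to the $\!k\!$-module endomorphism $\theta(a_0\otimes a_1)=\varphi(a_1)\otimes\pi(a_0)$ of $A$ precisely so that the reduction is defined on all of $A\,x\,A\,z\,A$ from the start.
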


\begin{proof}
Our first step will be to embed $A_0$ and $A_1$ in a common
$\!k\!$-algebra $A$ having a $\!k\!$-module endomorphism $\theta$ that
carries $A_1$ to our generating subset of $A_0.$
To do this, let us choose $\!k\!$-module decompositions of the
sort whose existence is assumed in the hypothesis,
\begin{equation}\label{d.A01=k+}\mbox{
$A_0=k\oplus M_0,\quad A_1=k\oplus M_1.$
}\end{equation}
Letting
\begin{equation}\label{d.A0(X)A1}\mbox{
$A=A_0\otimes_k A_1$
}\end{equation}
(made a $\!k\!$-algebra in the usual way), we see from~\eqref{d.A01=k+}
that the $\!k\!$-algebra homomorphisms
of $A_0$ and $A_1$ into $A$ given by
$a_0\mapsto a_0\otimes 1$ and $a_1\mapsto 1\otimes a_1$ are embeddings.
Letting $\pi: A_0\to k$ be the $\!k\!$-module projection along
$M_0,$ we find that the $\!k\!$-module endomorphism
$\theta$ of $A$ given by
\begin{equation}\label{d.pipsi}\mbox{
$\theta(a_0\otimes a_1)\,=\,\varphi(a_1)\otimes\pi(a_0)$
}\end{equation}
carries $k\otimes_k A_1,$ our copy of $A_1,$
onto $\varphi(A_1)\otimes_k k,$ the generating $\!k\!$-submodule for
our copy of $A_0.$

Now that we have $A$ and $\theta,$ the remainder of our proof
is like that of Theorem~\ref{T.easy}, but simpler.
We take two (rather than three) free $\!k\!$-centralizing
$\!(A,A)\!$-bimodules,
\begin{equation}\label{d.AxA+}\mbox{
$M_x\,=\,A\,x\,A\,\cong\,A\otimes_k A,$\qquad
$M_z\,=\,A\,z\,A\,\cong\,A\otimes_k A,$
}\end{equation}
form the tensor ring $A\langl M_x\oplus M_z\rangl$
on their direct sum, and impose the relations determined by
a single bimodule homomorphism from the component indexed by $x\,z,$
namely $M_x\otimes_A M_z=A\,x\,A\,z\,A,$ to the component
indexed by $1,$ namely $A,$ where this
homomorphism is defined to act on generators by
\begin{equation}\label{d.aa'a''}\mbox{
$a\,x\,a'\,z\,a''\ \longmapsto\ a\,\theta(a')\,a''.$
}\end{equation}
On the indexing free monoid on $\{x,\,z\},$ this corresponds
to the single reduction $x\,z\mapsto 1,$ which has no ambiguities.
As in the proof of Theorem~\ref{T.easy},
we deduce that the relations corresponding
to~\eqref{d.aa'a''} define a $\!k\!$-algebra $R$ in which $A$
is embedded.
Hence $A_0$ and $A_1$ are embedded in $R,$ where they
satisfy~\eqref{d.varphi}.
But that relation shows that the subalgebra of $A$
generated by $x,\,z$ and $A_1$ contains $A_0;$
so that subalgebra is all of $A,$ as required.
\end{proof}

In fact, there is a result in the literature which achieves
much greater generality in some ways
(though in others it is more restricted).
Bokut{\mz} shows in Theorems~1 and~$1'$ of \cite{LB}
that for any four nonzero nonunital algebras $A_0,$ $A_1,$ $A_2,$ $A_3$
over a field $k,$ one can embed $A_0$ in an algebra $R$
generated by the union of one copy of each of $A_1,$ $A_2$ and $A_3,$
as long as $A_0$ satisfies the obvious restriction of
having $\!k\!$-dimension less than or equal to that
of the $\!k\!$-algebra coproduct of $A_1,$ $A_2$ and $A_3$
(namely, $\max(\aleph_0,\,\dim A_1,\,\dim A_2,\,\dim A_3)),$
and (for less obvious reasons; but see note at
reference \cite{LB+2} below)
as long as $\r{card}\,k$ is less than or equal that same dimension.
Moreover, Bokut{\mz}'s construction makes $R$ a simple $\!k\!$-algebra!

So Theorem~\ref{T.A0A1}, in the case where
$k$ is a field, and our algebras are nonunital, and the
cardinality of $k$ satisfies the indicated bound,
is majorized by the particular case of Bokut{\mz}s result
where $A_2$ and $A_3$ are free algebras on single
generators $x$ and $z.$

Given that Proposition~\ref{P.easy2} and Theorem~\ref{T.M&A} above
improve on our original $x\,y^n\,z$ construction by using
two generators rather than three, it is natural to ask whether
one can get a result that embeds an algebra $A_0$
in an algebra $R$ generated by copies of two given algebras, $A_1$
and $A_2,$ rather than the three of the result quoted.
We obtain such a result, Theorem~\ref{T.A0A1A2}, below
(though the algebras allowed are not quite as general as I would like;
and I do not attempt to make $R$ simple).

Let us recall, before going further, that nonunital $\!k\!$-algebras
$R$ correspond to unital $\!k\!$-algebras $R'$ given
with augmentation homomorphisms $\pi: R'\to k,$ via the
constructions $R'=k+R$ and $R=\ker(\pi),$ and that these
constructions in fact give an equivalence between the category
of nonunital $\!k\!$-algebras and the category
of augmented unital $\!k\!$-algebras.
From this point of view, the condition
in Theorem~\ref{T.A0A1} above that $A_0$ and
$A_1$ each have $k$ as a $\!k\!$-module direct summand
is a weakened version of nonunitality, a ``module-theoretic
augmentation'' rather than a ring-theoretic one.
In the next result, we likewise have augmentation-like
conditions of various strengths on the three given algebras.
That is not surprising,
since the result is modeled on Theorem~\ref{T.M&A}.

\begin{theorem}\label{T.A0A1A2}
Let $k$ be a commutative ring, and let $A_1$ and $A_2$ be
$\!k\!$-algebras such that

\textup{(i)}~the structure map $k\to A_1$
admits a module-theoretic left inverse $\pi,$
whose kernel we shall denote $M_1,$ and

\textup{(ii)}~$A_2$ admits a surjective $\!k\!$-algebra homomorphism
$\psi: A_2\to k[x]/(x^3),$ whose kernel we shall denote~$M_2.$
We shall, by abuse of notation, use the same symbol $x$
for the image in $k[x]/(x^3)$ of $x\in k[x],$ and also
for a fixed inverse image, in $A_2,$ of that element
of $k[x]/(x^3)$ under $\psi.$

Then in the coproduct \textup{(}``free product''\textup{)}
\begin{equation}\label{d.B=}\mbox{
$B\ =\ A_1\smallcoprod A_2$
}\end{equation}
of $A_1$ and $A_2$ as $\!k\!$-algebras,
the $\!k\!$-submodule $x\,M_1\,x$ is isomorphic to
$M_1,$ and generates a
nonunital $\!k\!$-subalgebra $A$ isomorphic to the
nonunital tensor algebra $[k]\langl M_1\rangl;$
and this subalgebra $A$ has the ideal extension property in~$B.$

Hence, any $\!k\!$-algebra $A_0$ which admits a
$\!k\!$-algebra homomorphism to $k$ \textup{(}an augmentation\textup{)},
and which can be generated as a $\!k\!$-algebra by a module-theoretic
homomorphic image of $M_1,$ can be embedded in a $\!k\!$-algebra
$R$ generated by an image of $A_1$ and an image of $A_2.$
Moreover, these images can be taken to be
isomorphic copies of those two algebras.
\end{theorem}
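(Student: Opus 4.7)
My plan is to put the standard normal form on $B = A_1\smallcoprod A_2$, identify the subalgebra generated by $xM_1x$, establish the ideal extension property by a projection argument in the spirit of Lemma~\ref{L.iso>ext}, and then deduce the embedding of $A_0$.

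First, I fix $k$-module decompositions $A_1 = k\oplus M_1$ (from~(i)) and $A_2 = k\oplus kx\oplus kx^2\oplus M_2$ (from~(ii); the splitting exists because $k[x]/(x^3)$ is $k$-free). Put $N_2 := kx\oplus kx^2\oplus M_2\subseteq A_2$; as the kernel of the composite augmentation $A_2\to k[x]/(x^3)\to k$, it is an ideal of~$A_2$. The normal form for the coproduct of two $k$-algebras each split off~$k$ then reads
\[
B \;=\; k \;\oplus\; \bigoplus_w V_w,
\]
where $w$ runs over the nonempty alternating words in $\{1,2\}$ and $V_w$ is the $\otimes_k$-tensor product of $M_1$'s at type-$1$ positions and $N_2$'s at type-$2$ positions. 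A product $(xm_1x)(xm_2x)\cdots(xm_nx)$ collapses, via $x\cdot x = x^2$ in $A_2$, to $x\otimes m_1\otimes x^2\otimes m_2\otimes\cdots\otimes x^2\otimes m_n\otimes x$, which lies in the direct summand $kx\otimes M_1\otimes kx^2\otimes M_1\otimes\cdots\otimes kx^2\otimes M_1\otimes kx$ of $V_{2121\dots 2}$; since $kx$ and $kx^2$ are distinct free rank-$1$ summands of $N_2$, this summand is isomorphic via the obvious map to $M_1^{\otimes n}$. Summing over $n\geq 1$ identifies the generated subalgebra $A$ with $\bigoplus_{n\geq 1}M_1^{\otimes n}$ under concatenation, i.e.\ with $[k]\langl M_1\rangl$; the case $n = 1$ gives $xM_1x\cong M_1$.

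For the ideal extension property, fix an ideal $I$ of $A$ and set $J = (k+B)I(k+B)$. Refine the decomposition of $B$ by further splitting each $N_2$-factor of each $V_w$ according to $N_2 = kx\oplus kx^2\oplus M_2$, so that $A$ appears as a direct summand of $B$, and let $\rho\colon B\to A$ be the corresponding $k$-module projection. The key observation is that for an element $e$ in the subsummand $x\otimes M_1\otimes x^2\otimes\cdots\otimes x^2\otimes M_1\otimes x\cong M_1^{\otimes n}$ of $A$ and components $b, b'$ of the refined decomposition of $B$ (or $b, b' = 1$),
\[
\rho(beb') \;=\; (\mathrm{proj}\,b)\cdot e\cdot(\mathrm{proj}\,b'),
\]
where $\mathrm{proj}\,b$ is $b$ if $b\in\{1\}\cup A$ and $0$ otherwise. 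This rests on three facts: $N_2$ is an ideal of~$A_2$, so any merge at a $V_2$--$V_2$ junction stays in $N_2$, producing no scalar; $x\cdot x = x^2\in kx^2$; and $x^2\cdot x,\,\mu\cdot x\in M_2$ for $\mu\in M_2$. A case analysis on whether $b$ starts with $M_1$, starts with a non-$x$ element of $N_2$, has an internal $N_2$-entry other than $x^2$, ends with $M_1$, or ends with a non-$x$ element of $N_2$ (and analogously for $b'$) shows that $beb'$ contributes to the $A$-part only when $b, b'\in\{1\}\cup A$, and in that case $beb'\in A\cdot I\cdot A\subseteq I$. Extending bilinearly over $f\in I$ gives $\rho((k+B)I(k+B))\subseteq I$, and since $\rho$ is the identity on $A$, this yields $J\cap A = I$.

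The same analysis shows that no scalar is ever produced at a junction merge, so each $bfb'$ decomposes into components of $B$ of alternating-word length~$\geq 3$ (the length drops by at most~$1$ at each of the two possible merges, while $|f|\geq 3$). Thus $J$ is contained in the length-$\geq 3$ part of~$B$; since $A_1, A_2\subseteq B_{\leq 1}$ and $k\subseteq B_0$, one obtains $J\cap A_1 = J\cap A_2 = J\cap k = 0$. Setting $R = B/J$, $A_1$, $A_2$, and $A/I$ embed in $R$ as isomorphic copies. To deduce the statement for $A_0$: with $\epsilon_0\colon A_0\to k$ the augmentation and $\varphi\colon M_1\to A_0$ the given generator map, replace $\varphi$ by $m\mapsto\varphi(m) - \epsilon_0(\varphi(m))$ to arrange that $\varphi(M_1)\subseteq A_0^+ := \ker\epsilon_0$, then take $I$ to be the kernel of the induced surjection $[k]\langl M_1\rangl\twoheadrightarrow A_0^+$, so $A/I\cong A_0^+$. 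The embeddings of $k$ and $A_0^+$ in $R$ assemble into a $k$-algebra embedding $A_0 = k\oplus A_0^+\hookrightarrow R$ (the assembly is injective because a would-be relation $c + \bar a = 0$ with $c\in k$ and $\bar a\in A/I$ would force $c + \tilde a\in J\subseteq B_{\geq 3}$ for a lift $\tilde a$, hence $c = 0$), and $R$ is generated by the images of $A_1$ and $A_2$ because $B$ is. The main obstacle is the case analysis behind the projection identity above: it is precisely the hypothesis $k[x]/(x^3)$ in~(ii) -- as opposed to the weaker $k[x]/(x^2)$ -- that makes $x\cdot x$ land in $kx^2$ (available as an interior letter of $A$-monomials) while $x^2\cdot x$ and $\mu\cdot x$ both land in $M_2$, preventing spurious $x^2$'s from appearing at unwanted merges.
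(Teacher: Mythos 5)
Your proposal is correct and follows essentially the same route as the paper's own proof: the same coproduct normal form with $M'_2=k\,x\oplus k\,x^2\oplus M_2$ refined into its three pieces, the same identification of the subalgebra generated by $x\,M_1\,x$ with the nonunital tensor algebra via the summands $k\,x\otimes M_1\otimes k\,x^2\otimes\dots\otimes k\,x,$ the same ``isolated-subsemigroup-style'' argument (your projection $\rho$ is just an explicit packaging of the paper's splitting of $\sum t_i f_i t'_i$ into terms with $t_i,t'_i$ in $k+A$ and the rest), and the same augmentation-shift to embed $A_0=k\oplus\ker\epsilon_0.$ Your length-$\!\geq 3\!$ observation and the explicit injectivity check for $k\oplus A_0^+\to B/J$ merely spell out details the paper leaves to the reader (only note that components lying in the summand $k$ should be kept along with those in $A,$ i.e., read ``$k+A$'' for your ``$\{1\}\cup A$'').
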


\begin{proof}
By~(i),
\begin{equation}\label{d.A1=}\mbox{
$A_1\ =\ k\oplus M_1$
}\end{equation}
as $\!k\!$-modules, while~(ii) leads to a decomposition
\begin{equation}\label{d.A2=&}\mbox{
$A_2\ =\ k\oplus k\,x\oplus k\,x^2\oplus M_2.$
}\end{equation}
So writing
\begin{equation}\label{d.M'2}\mbox{
$M'_2\ =\ k\,x\oplus k\,x^2\oplus M_2,$
}\end{equation}
we have
\begin{equation}\label{d.A2=}\mbox{
$A_2\ =\ k\oplus M'_2.$
}\end{equation}

By Corollary~8.1 of \cite{<>}, the decompositions~\eqref{d.A1=}
and~\eqref{d.A2=} lead to a decomposition of the $\!k\!$-algebra
coproduct $B=A_1\smallcoprod A_2$
as the $\!k\!$-module direct sum of all alternating tensor products
\begin{equation}\label{d.M1M2M1M2}\mbox{
$\dots\otimes_k M_1\otimes_k M'_2 \otimes_k M_1\otimes_k
M'_2\otimes_k\dots,$
}\end{equation}
(where each such tensor product may begin with either
$M_1$ or $M'_2$ and end with either $M_1$ or $M'_2,$ and where
we understand the unique length-$\!0\!$ tensor
product to be $k,$ and the two length-$\!1\!$
products to be $M_1$ and $M'_2).$

Using~\eqref{d.M'2}, we can now refine this decomposition,
writing $B$ as the direct sum of submodules each of which is
\begin{equation}\begin{minipage}[c]{35pc}\label{d.M1*M1*}
a tensor product such that, as in~\eqref{d.M1M2M1M2}, every
other term is $M_1,$ but where each of the remaining
terms can be any of the three
$\!k\!$-modules $k\,x,$ $k\,x^2,$ or $M_2.$
\end{minipage}\end{equation}

Let us note that if we multiply two of
the summands~\eqref{d.M1*M1*} together within
$B,$ the result
will {\em often} lie entirely within a third.
The exception is when the first factor ends with $M_1$
and the second begins with $M_1,$ in which case the relation
\begin{equation}\label{d.M1+k}\mbox{
$M_1\,M_1\ \subseteq\ k+M_1,$
}\end{equation}
arising from the relatively weak module-theoretic
hypothesis~(i) on $A_1,$ leads to two such summands.

We now consider the summand
\begin{equation}\label{d.xM1x}\mbox{
$(k\,x)\otimes_k M_1\otimes_k(k\,x)\ =\ x\,M_1\,x\ \cong\ M_1,$
}\end{equation}
of $B,$ and the nonunital subalgebra of $B$ it generates, which we name
\begin{equation}\label{d.A}\mbox{
$A\ =\ [k]\langl x\,M_1\,x\rangl.$
}\end{equation}
Clearly, when we multiply~\eqref{d.xM1x} by itself an arbitrary positive
number of times, there are no cases
of a tensor product ending in $M_1$ being multiplied by
one beginning with $M_1;$ so the product takes the form
\begin{equation}\label{d.xMx^n}\mbox{
$(k\,x)\,M_1\,(k\,x^2)\,M_1\,(k\,x^2)\,\dots\,
(k\,x^2)\,M_1\,(k\,x)\ \cong
\ M_1\otimes_k M_1\otimes_k \dots\otimes_k M_1$\qquad
(with $\geq 1$ $M_1$'s).
}\end{equation}
(To see the isomorphism, note that $k\,x^2\cong k\,x\cong k$ as
$\!k\!$-modules, and $-\otimes_k k \otimes_k -$ simplifies
to $-\otimes_k -.)$
Thus~\eqref{d.A} is, as claimed, isomorphic to the nonunital tensor
algebra on the $\!k\!$-module $M_1.$

Now suppose we multiply one of the summands~\eqref{d.xMx^n} both
on the left by a summand~\eqref{d.M1*M1*}
and on the right by a summand~\eqref{d.M1*M1*}.
Again, because of the form of~\eqref{d.xMx^n},
this does not lead to an $M_1$ being multiplied by another $M_1,$
so the product always lies in a single summand~\eqref{d.M1*M1*}.
The reader should verify that this
summand will again have the form~\eqref{d.xMx^n} if
and only if the left factor and the right factor
are each either $k$ or of the form~\eqref{d.xMx^n}.
Thus, the summands~\eqref{d.xMx^n} form something like an
isolated subsemigroup among the summands~\eqref{d.M1*M1*};
though we can't quite use that concept,
since the summands~\eqref{d.M1*M1*} don't form
a semigroup in a natural way, in view of~\eqref{d.M1+k}.

We can now reason as in the proof of
Lemma~\ref{L.iso>ext}: given an ideal $I\subseteq A,$
let $J$ be the ideal of $B$ that it generates.
The general element of $J$ can be written in the form~\eqref{d.tit},
i.e., $\sum_{i=1}^n t_i\,f_i\,t'_i,$
where each $f_i\in I,$ while each $t_i$ and each $t'_i$ lies in
a summand~\eqref{d.M1*M1*}.
Those terms of~\eqref{d.tit} where both $t_i$ and $t'_i$ lie in
summands that are either $k$ or of the form~\eqref{d.xMx^n},
and so belong to $k+A,$
will again belong to $I,$ while all other summands
will, by the result of the preceding
paragraph, have values in the $\!k\!$-submodule of
$B$ spanned by the summands~\eqref{d.M1*M1*}
not of the form~\eqref{d.xMx^n}.
Hence if~\eqref{d.tit} lies in $A,$
the sum of all terms of the latter sort must be zero.
Hence our expression~\eqref{d.tit} will equal the sum of the terms
of the first
sort, hence lie in $I,$ establishing the ideal extension property.

It is also easy to verify that for $I$ and $J$ as above,
elements of $J$ have zero components in
the summands $k,$ $M_1,$ and $k\,x+k\,x^2+M_2$ of $B.$
Thus, the images of $A_1$ and $A_2$ in $R=B/J$ are faithful.

Now from the assumption that $A_0$ can be generated
as a {\em unital} $\!k\!$-algebra by a homomorphic image of $M_1,$
it is easy to see that the kernel of its augmentation map
-- let us call that kernel $M_0$ --
can be generated as a {\em nonunital} $\!k\!$-algebra by such an image,
hence, since $A$ is isomorphic
to the nonunital tensor algebra on $M_1,$ this algebra
$M_0$ is isomorphic to $A/I$ for some ideal $I\subseteq A.$
By the preceding arguments, $A/I$ embeds in the $\!k\!$-algebra
$R=B/J,$ where $J=B\,I\,B.$
We thus have $A_0=k+M_0\cong k+A/I\subseteq B/J,$
giving the desired embedding of $A_0$ in an algebra
generated by embedded copies of $A_1$ and~$A_2.$
\end{proof}

The statement of Theorem~\ref{T.A0A1A2} is not particularly elegant.
(If we had assumed $k$ a field, we could have dropped
condition~(i), which is automatic in that case,
making the statement a little nicer.
If, instead, we had worked with nonunital rings, we could have dropped
both that and the augmentation assumption on $A_0,$
and also shortened the proof.)
Nor can Theorem~\ref{T.A0A1A2} claim to
be the strongest possible result of this sort.
E.g., with slightly different assumptions on $A_1$ and $A_2,$
we could have weakened the assumption that $A_0$ was
generated by one image of $M_1$ to allow it to
be generated by a countable family of such images.

However, the proof of the theorem, as given, illustrates
nicely several techniques that can be used in such situations.

I do not know a way of avoiding the need
for something like the assumption in that theorem that
$A_2$ admit a homomorphism onto $k[x]/(x^3),$ even if $k$ is a field.
But there are no evident examples
showing that embeddability fails without such an assumption;
so let us ask the following question.
(Note that algebras are unital, since the contrary is not stated.)

\begin{question}\label{Q.A1A2}
Suppose $k$ is a field, and $A_1,\ A_2$ are $\!k\!$-algebras,
both of which have $\!k\!$-dimension $\geq 2,$ and at least
one of which has $\!k\!$-dimension $\geq 3.$

Can every $\!k\!$-algebra $A_0$ with
$\dim_k A_0\leq \max(\aleph_0,\,\dim A_1,\,\dim A_2)$
be embedded in a $\!k\!$-algebra generated by an
embedded copy of $A_1$ and an embedded copy of $A_2$?
\end{question}

The condition above that at least one of
$A_1,$ $A_2,$ have $\!k\!$-dimension $\geq 3$ is needed, for
if both are $\!2\!$-dimensional, say with
bases $\{1,\,b\}$ and $\{1,\,c\},$ then for each $n,$
there are only $2n+1$ alternating words of length $\leq n$ in $b$ and
$c;$ so $A_1\smallcoprod A_2$ has linear growth as a $\!k\!$-algebra.
Hence no subalgebra of a homomorphic
image of that coproduct can have faster than linear growth;
so one cannot, for instance, embed
the free algebra $k\langl x,\,y\rangl$ in such an algebra.
With that case excluded, as in
Question~\ref{Q.A1A2}, $A_1\smallcoprod A_2$ is
easily seen to have exponential growth, and, indeed, to contain
free $\!k\!$-algebras on two generators, which in turn
contain free $\!k\!$--algebras on countably many generators.
If we could show that $A_1\smallcoprod A_2$ had a free
subalgebra on two generators which satisfied the
ideal extension property in $A_1\smallcoprod A_2,$
then we would get a positive
answer to Question~\ref{Q.A1A2} for countable-dimensional~$A_0.$

The case where $k$ is not a field is messier;
in particular, the module-theoretic condition~\eqref{d.A01=k+}
in Theorem~\ref{T.A0A1} definitely cannot be dropped.
For instance, if $k=\Z,$ then the $\!k\!$-algebras
$\Q$ and $\Z+(\Q/\Z),$ where the latter denotes the
result of making $\Q/\Z$ a nonunital $\!\Z\!$-algebra
via the zero multiplication, and then adjoining a unit,
cannot lie in a common unital $\!\Z\!$-algebra,
since a $\!\Q\!$-algebra cannot have additive torsion
-- though $\Z+(\Q/\Z)$ is generated as
a unital $\!\Z\!$-algebra by a module-theoretic image of $\Q.$

\section{Constructions not using generators and relations}\label{S.x<>}

When one wants to establish that
certain relations in an algebra do not entail other relations,
an alternative to directly calculating the consequences
of the relations is to construct an {\em action} of
such an algebra exhibiting the non-equality
(cf.\ discussion in \S11.2 of \cite{<>}).
Proofs of this sort are very convenient when they are available.

In fact, the first result of which I am aware showing that
countably generated rings could be embedded {\em unitally}
in finitely generated rings, the main theorem of~\cite{O+V+W}
(which in fact gives embeddings in $\!2\!$-generated rings),
uses a technique of this sort, formulated in terms
of infinite column-finite matrices over the given ring.
The method is equally applicable to algebras.
(A generalization to topological algebras is given in~\cite{top}.)

Our final result, below shows how one of the results of the
present note, the ``$x\,y^n\,z$'' case of Theorem~\ref{T.easy},
can be given an alternative proof of this sort.

Note first that any $\!k\!$-algebra $A$ can be embedded in
the endomorphism algebra of some $\!k\!$-module (namely, any faithful
$\!A\!$-module, regarded as a $\!k\!$-module), and that by taking a
countably infinite direct sum of copies of such a module, we
get another such $\!k\!$-module
which, moreover, is a countably infinite
direct sum of isomorphic copies of itself.
We can now very quickly prove our result.

\begin{lemma}\label{L.M=(+)}
Let $k$ be a commutative ring, and $M$ a $\!k\!$-module
which is a countably infinite direct sum of isomorphic copies
of itself,
\begin{equation}\label{d.oplus}\mbox{
$M\ =\ \bigoplus_{i=0}^\infty M_i\,.$
}\end{equation}
We shall write elements
of $\r{End}_k(M)$ to the left of their arguments.

Then for every countable family $s_0,\,s_1,\,\dots,\,s_n,\,\dots$ of
members of $\r{End}_k(M),$ there exist $x,\,y,\,z\in\r{End}_k(M)$
satisfying $x\,y^i\,z=s_i,$ i.e.,~\eqref{d.xy^nz}.
% Here $y$ and $z$ can be chosen independent of the $s_i.$

Hence for any countably generated $\!k\!$-algebra $A,$
letting $N$ be a faithful $\!A\!$-module, and applying the above
to a direct sum $M$ of
a countably infinite family of copies of $N,$ we recover
the case of Theorem~\ref{T.easy} that uses the
relations~\eqref{d.xy^nz}.
\end{lemma}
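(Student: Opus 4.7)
The plan is to exploit the self-similarity of $M$ as directly as possible: let $z$ embed $M$ into the zeroth summand $M_0$, let $y$ act as a "shift" sending $M_i$ onto $M_{i+1}$, and let $x$ collapse the $\!i\!$-th summand back to $M$ via $s_i.$ Then $x\,y^i\,z$ applied to $m\in M$ should trace the path $m\mapsto$(copy of $m$ in $M_0)\mapsto$(copy of $m$ in $M_i)\mapsto s_i(m),$ giving exactly~\eqref{d.xy^nz}.

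To make this precise, choose $\!k\!$-module isomorphisms $\alpha_i\colon M\to M_i$ (which exist by hypothesis~\eqref{d.oplus}), and define three elements of $\mathrm{End}_k(M)$ as follows. Let $z$ be the composition of $\alpha_0$ with the inclusion $M_0\hookrightarrow M,$ so that $z(m)=\alpha_0(m)$ for all $m\in M.$ Let $y$ be the unique endomorphism of $M=\bigoplus_i M_i$ whose restriction to each summand $M_i$ is $\alpha_{i+1}\alpha_i^{-1}\colon M_i\to M_{i+1}\subseteq M.$ Finally, let $x$ be the unique endomorphism whose restriction to each $M_i$ is $s_i\,\alpha_i^{-1}\colon M_i\to M.$ Well-definedness of $y$ and $x$ is immediate from the universal property of the direct sum~\eqref{d.oplus}, and a routine induction on $i$ shows that $y^i\,z(m)=\alpha_i(m),$ whence $x\,y^i\,z(m)=s_i(m),$ as required.

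For the final clause, given a countably generated $\!k\!$-algebra $A$ with countable generating set $\{s_0,s_1,\dots\},$ embed $A$ in $\mathrm{End}_k(N)$ for some faithful $\!A\!$-module $N,$ let $M=\bigoplus_{i=0}^\infty N$ (a direct sum of countably many copies of $N,$ which is still faithful and which clearly satisfies the hypothesis of the lemma), regard each $s_i$ as an element of $\mathrm{End}_k(M)$ acting diagonally, and apply the first part. By~\eqref{d.xy^nz}, the $\!k\!$-subalgebra of $\mathrm{End}_k(M)$ generated by $x,\,y,\,z$ contains every $s_i,$ hence contains the embedded copy of $A,$ giving the $\!3\!$-generator embedding of Theorem~\ref{T.easy}. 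The only mildly delicate point in this argument is verifying that $y^i\,z(m)=\alpha_i(m)$ for all $i,$ but this is a one-line induction; there is no real obstacle.
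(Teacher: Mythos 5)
Your construction is exactly the paper's: the paper takes $z$ an isomorphism of $M$ onto $M_0$, $y$ carrying each $M_i$ isomorphically onto $M_{i+1}$, and $x$ acting on $M_i$ by $s_i(y^iz)^{-1}$, which is precisely your $s_i\alpha_i^{-1}$ once you note $y^iz=\alpha_i$. The final clause is also handled the same way (diagonal action of $A$ on a countable direct sum of copies of a faithful module), so the proposal is correct and essentially identical to the paper's proof.
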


\begin{proof}
Given~\eqref{d.oplus},
let $z\in\r{End}_k(M)$ carry $M$ isomorphically to its submodule $M_0,$
and let $y\in\r{End}_k(M)$ take each $M_i$ isomorphically to $M_{i+1}.$
Viewing $y^iz$ as an isomorphism $M\to M_i$ for each $i,$
let $x: M=\bigoplus_{i=0}^\infty M_i\to M$ be the map which
acts on each $M_i$ by $s_i(y^i z)^{-1}.$
Then for each $i$ we have $x\,y^i\,z=s_i,$ as claimed.
Letting $R$ be the $\!k\!$-subalgebra of
$\r{End}_k(M)$ generated by $x,$ $y$ and $z,$ we get the
desired case of Theorem~\ref{T.easy}.
\end{proof}

\section{Acknowledgements}\label{S.ackn}

I am grateful to Ken Goodearl,
Pace Nielsen, Gabriel Sabbagh, Lance Small and Agata Smoktunowicz
for helpful comments on earlier versions of this note,
and pointers to related work.

\end{document}